\RequirePackage{fix-cm}
\documentclass[smallextended, envcountsame]{svjour3}
\smartqed
\journalname{}

\usepackage[unicode = false,
    pdftoolbar = true,
    colorlinks = true,
    linkcolor = blue,
    citecolor = blue,
    filecolor = black,
    urlcolor = blue,
    breaklinks = true]{hyperref}

\usepackage[utf8]{inputenc}
\usepackage{listings}  
\usepackage{enumitem}  
\usepackage{amsmath}   
\usepackage{amssymb}   
\usepackage{xcolor}    
\usepackage{graphicx}  
\usepackage{verbatim}
\usepackage{geometry}
\usepackage{mathtools}
\usepackage{float}
\usepackage{bm}
\usepackage[ruled]{algorithm2e}
\usepackage{subcaption}
\usepackage{placeins}
\usepackage{listings}
\usepackage[justification=centering]{caption}
\usepackage{multirow}
\usepackage{orcidlink}
\usepackage{bbding}
\usepackage{amsthm}

\definecolor{gab}{HTML}{50c878}
\definecolor{pat}{HTML}{bf68ff}
\definecolor{sham}{HTML}{7faaff}

\newtheorem{assump}{Assumption}
\newtheorem{coroll}{Corollary}
\newtheorem{defini}{Definition}
\newtheorem{exampl}{Example}

\newtheorem{rema}{Remark}
\newtheorem{theo}{Theorem}

\allowdisplaybreaks[4]

\DeclareMathAlphabet{\mymathbb}{U}{BOONDOX-ds}{m}{n}

\makeatletter
\newcommand*{\rom}[1]{\expandafter\@slowromancap\romannumeral #1@}
\makeatother

\SetKwIF{If}{ElseIf}{Else}{if}{}{else if}{else}{end if}%
\SetKwRepeat{Do}{do}{while}
\allowdisplaybreaks[4]
\SetArgSty{textnormal}

\title{Relationships between full-space and subspace quadratic interpolation models and simplex derivatives}

\author{Yiwen Chen\,\orcidlink{0000-0002-8720-932X}\,\Envelope}
\institute{Department of Mathematics, University of British Columbia, Kelowna, British Columbia, V1V 1V7, Canada.\\
This research is partially funded by the Killam Doctoral Scholarship and the UBC Okanagan Distinguished Doctoral Scholar Award.\\
\email{yiwchen@student.ubc.ca}}
\date{\today}

\begin{document}

\maketitle

\begin{abstract}
    Quadratic interpolation models and simplex derivatives are fundamental tools in numerical optimization, particularly in derivative-free optimization.  When constructed in suitably chosen affine subspaces, these tools have been shown to be especially effective for high-dimensional derivative-free optimization problems, where full-space model construction is often impractical.  In this paper, we analyze the relationships between full-space and subspace formulations of these tools.  In particular, we derive explicit conversion formulas between full-space and subspace models, including minimum-norm models, minimum Frobenius norm models, least Frobenius norm updating models, as well as models constructed via generalized simplex gradients and Hessians.  We show that the full-space and subspace models coincide on the affine subspace and, in general, along directions in the orthogonal complement.  Overall, our results provide a theoretical framework for understanding subspace approximation techniques and offer insight into the design and analysis of derivative-free optimization methods.
\end{abstract}
\medskip

\noindent {\bf Keywords:} Derivative-free optimization; Quadratic interpolation; Simplex derivative; Subspace approximation

\section{Introduction}
    Derivative-free optimization (DFO) methods are a class of optimization methods designed for problems where the derivatives of the objective and/or constraint functions are unavailable or expensive to compute~\cite{audet2017derivative,conn2009introduction}.  One major class of DFO methods, known as model-based DFO methods, constructs approximation models from function evaluations and utilizes these models to guide the optimization process.  Among the most widely used modeling and approximation techniques are quadratic interpolation models (see, e.g., \cite[Chapters 2 to 6]{conn2009introduction}) and simplex derivatives, which include generalized simplex gradients \cite{conn2008geometry,custodio2008using,custodio2007using,regis2015calculus} and Hessians \cite{hare2024matrix}.  A general framework for the approximation errors of first-order simplex derivatives can be found in~\cite{chen2025general}.  These techniques have been the backbone of many model-based DFO methods \cite{audet2020model,berahas2019derivative,larson2019derivative,liuzzi2019trust}.

    As the problem dimension grows, however, the construction of accurate full-space models becomes increasingly challenging.  Indeed, a determined quadratic interpolation model (see Definition \ref{def:detquad}) in~$\mathbb{R}^n$ requires $(n+1)(n+2)\slash 2$ sample points \cite[Chapter 3]{conn2009introduction}, which scales quadratically with the dimension $n$.  This challenge has motivated the development of subspace model-based DFO methods, in which models are constructed in carefully chosen low-dimensional affine subspaces.  These methods have proven, both numerically and theoretically, to be promising for handling high-dimensional DFO problems \cite{cartis2023scalable,cartis2026randomized,chen2024qfully,chen2025clarsta,dzahini2024stochastic}.

    Despite the successful application of subspace approximations, the theoretical relationship between subspace models and their full-space counterparts remains only partially understood.  In this paper, we provide a theoretical framework that clarifies these relationships.  Assuming that the sample set lies in an affine subspace, we derive conversion formulas between full-space and subspace formulations of several widely used quadratic interpolation models, including minimum-norm models \cite{conn1996algorithm}, minimum Frobenius norm models~\cite{conn1998derivative}, and least Frobenius norm updating models \cite{powell2004least}.  We show that the full-space and subspace models coincide on the affine subspace and, in general, along directions in the orthogonal complement.  We also establish corresponding conversion formulas and coincidence results for generalized simplex gradients \cite{conn2008geometry,custodio2008using,custodio2007using,regis2015calculus} and generalized simplex Hessians \cite{hare2024matrix}, as well as the models constructed from them.  Using the same analysis technique, similar results can be achieved for other models and simplex derivatives.  Overall, these results deepen the theoretical understanding of subspace approximation techniques and provide insight into the design and analysis of model-based DFO methods.

    The remainder of this paper is structured as follows.  We end this section by introducing the notations and background definitions used in this paper.  Section \ref{sec:full&subspaceUnderdModels} analyzes the relationships between full-space and subspace minimum norm, minimum Frobenius norm, and least Frobenius norm updating models.  Section \ref{sec:full&subspaceGSGGSH} analyzes the relationships between full-space and subspace generalized simplex gradients, Hessians, and the models constructed from them.  Section~\ref{sec:conclusion} concludes the work covered by this paper and suggests some promising future directions.

    \subsection{Notations}
        We use $f:\mathbb{R}^n\to\mathbb{R}$ to denote the function to be approximated.  We use $\mymathbb{0}$ to denote the all-zero vector with appropriate dimensions.  We use $e^i$ to denote the $i$-th coordinate vector with appropriate dimensions.  We use $I_n$ to denote the identity matrix of order $n$.  We use $S_n(\mathbb{R})$ to denote the set of all $n\times n$ real symmetric matrices.  We use $\mathrm{col}(A)$ to denote the column space of matrix $A$ and $\mathrm{col}(A)^\perp$ to denote its orthogonal complement.  We use $\|\cdot\|$ to denote the Euclidean norm of a vector and the spectral norm of a matrix.  We use $\|\cdot\|_F$ to denote the Frobenius norm of a matrix.

    \subsection{Determined quadratic interpolation, minimum norm, minimum Frobenius norm, and least Frobenius norm updating models}
        We say that a sample set $Y\subseteq\mathbb{R}^n$ is poised for quadratic interpolation if the linear system associated with the interpolation conditions is determined \cite[Chapter 3]{conn2009introduction}.  In this case, there exists a unique quadratic function that interpolates $f$ on $Y$. We refer to this quadratic function as the {\em determined quadratic interpolation (DQI) model}.
        \begin{defini}\label{def:detquad}
            Let $x^0\in\mathbb{R}^n$ be the point of interest.  Suppose that $Y$ is poised for quadratic interpolation.  The determined quadratic interpolation model of $f$ at $x^0$ over $Y$, denoted by $m_{\mathrm{DQI}}^{x^0,Y}(x)$ is the unique quadratic function satisfying
            \begin{equation*}
                m_{\mathrm{DQI}}^{x^0,Y}(y) = f(y)~~~\text{for all $y\in Y$}.
            \end{equation*}
        \end{defini}

        If the associated linear system is underdetermined, there are infinitely many quadratic functions that interpolate $f$ on $Y$.  In this case, additional criteria are required to select a quadratic interpolation model. Several such approaches have been proposed in the literature, and the most widely used ones are the {\em minimum norm (MN) models} \cite{conn1996algorithm}, {\em minimum Frobenius norm (MFN) models} \cite{conn1998derivative}, and {\em least Frobenius norm updating (LFU) models} \cite{powell2004least}.
        \begin{defini}
            Let $x^0\in\mathbb{R}^n$ be the point of interest.  Suppose that 
            \begin{equation*}
                Y=\{x^0\}\cup\{x^0+d^i:i=1,\ldots,m\}.
            \end{equation*}
            The class of minimum norm models of $f$ at $x^0$ over $Y$ is
            \begin{equation*}
                \left\{m_{\mathrm{MN}}^{x^0,Y}(x) = f(x^0) + \nabla_{\mathrm{MN}}f(x^0;Y)^\top\left(x-x^0\right) + \frac{1}{2}\left(x-x^0\right)^\top \nabla_{\mathrm{MN}}^2f(x^0;Y)\left(x-x^0\right)\right\}
            \end{equation*}
            where $(\nabla_{\mathrm{MN}}f(x^0;Y),\nabla_{\mathrm{MN}}^2f(x^0;Y))\in\mathcal{M}^{x^0,Y}_{\mathrm{MN}}$ and $\mathcal{M}^{x^0,Y}_{\mathrm{MN}}$ is the set of minimizers of
            \begin{align}
                \begin{split}\label{pro:MN}
                    \min_{\substack{\alpha\in\mathbb{R}^n\\ H\in S_n(\mathbb{R})}}~~~&\frac{1}{2}\left\|\alpha\right\|^2+\frac{1}{2}\left\|H\right\|_F^2\\
                    s.t.~~~&\left(d^i\right)^\top\alpha+\frac{1}{2}\left(d^i\right)^\top H\left(d^i\right) = f(x^0+d^i)-f(x^0),~~~i=1,\ldots,m.
                \end{split}\tag{MN}
            \end{align}
        \end{defini}
        \begin{rema}\label{rem:MN_unique}
            In Problem \eqref{pro:MN}, the objective function is strongly convex, and all constraints are linear with respect to $\alpha$ and $H$.  The minimizer of Problem~\eqref{pro:MN} exists and is unique if and only if Problem~\eqref{pro:MN} is feasible, i.e., there exists a quadratic function that interpolates $f$ on $Y$.  
        \end{rema}
        
        \begin{defini}
            Let $x^0\in\mathbb{R}^n$ be the point of interest.  Suppose that 
            \begin{equation*}
                Y=\{x^0\}\cup\{x^0+d^i:i=1,\ldots,m\}.
            \end{equation*}
            The class of minimum Frobenius norm models of $f$ at $x^0$ over $Y$ is
            \begin{equation*}
                \left\{m_{\mathrm{MFN}}^{x^0,Y}(x) = f(x^0) + \nabla_{\mathrm{MFN}}f(x^0;Y)^\top\left(x-x^0\right) + \frac{1}{2}\left(x-x^0\right)^\top\nabla_{\mathrm{MFN}}^2f(x^0;Y)\left(x-x^0\right)\right\}
            \end{equation*}
            where $(\nabla_{\mathrm{MFN}}f(x^0;Y),\nabla_{\mathrm{MFN}}^2f(x^0;Y))\in\mathcal{M}^{x^0,Y}_{\mathrm{MFN}}$ and $\mathcal{M}^{x^0,Y}_{\mathrm{MFN}}$ is the set of minimizers of
            \begin{align}
                \begin{split}\label{pro:MFN}
                    \min_{\substack{\alpha\in\mathbb{R}^n\\ H\in S_n(\mathbb{R})}}~~~&\frac{1}{2}\left\|H\right\|_F^2\\
                    s.t.~~~&\left(d^i\right)^\top\alpha+\frac{1}{2}\left(d^i\right)^\top H\left(d^i\right) = f(x^0+d^i)-f(x^0),~~~i=1,\ldots,m.
                \end{split}\tag{MFN}
            \end{align}
        \end{defini}
        \begin{rema}\label{rem:MFN_unique}
            If we only consider $H$ as a variable, then Problem \eqref{pro:MFN} can be written as $$\min_{H\in P}\frac{1}{2}\left\|H\right\|_F^2$$ where
            \begin{equation*}
                P=\left\{H\in S_n(\mathbb{R}):\exists\alpha\in\mathbb{R}^n~s.t.~\frac{1}{2}\left(d^i\right)^\top H\left(d^i\right) = f(x^0+d^i)-f(x^0)-\left(d^i\right)^\top\alpha,~i=1,\ldots,m\right\},
            \end{equation*}
            i.e., minimizing a strongly convex function over a closed convex set.  We have that $\nabla_{\mathrm{MFN}}^2f(x^0;Y)$ exists and is unique if and only if Problem \eqref{pro:MFN} is feasible.  Moreover, $\nabla_{\mathrm{MFN}}f(x^0;Y)$ exists (not necessarily unique) if and only if Problem \eqref{pro:MFN} is feasible.  
        \end{rema}
        The following example shows that, when Problem \eqref{pro:MFN} is feasible, $\nabla_{\mathrm{MFN}}f(x^0;Y)$ exists but may not be unique.
        \begin{exampl}\label{exp:MFNgradnotunique}
            Consider $n=3$, $f(x)=\|x\|^2$, $x^0=\mymathbb{0}$ and $Y=\{\mymathbb{0},e^1,e^2,e^1+e^2\}\subseteq\mathbb{R}^3$.  Then, the constraints of Problem~\eqref{pro:MFN} are 
            \begin{align*}
            \begin{cases}
                \left(e^1\right)^\top\alpha+\frac{1}{2}\left(e^1\right)^\top H\left(e^1\right) = 1\\
                \left(e^2\right)^\top\alpha+\frac{1}{2}\left(e^2\right)^\top H\left(e^2\right) = 1\\
                \left(e^1+e^2\right)^\top\alpha+\frac{1}{2}\left(e^1+e^2\right)^\top H\left(e^1+e^2\right) = 2.
            \end{cases}
            \end{align*}
            Solving Problem~\eqref{pro:MFN}, we get
            \begin{align*}
                \mathcal{M}^{x^0,Y}_{\mathrm{MFN}} = \left\{\left(\begin{bmatrix}
                    1\\
                    1\\
                    r
                \end{bmatrix}, \begin{bmatrix}
                    0 & 0 & 0\\
                    0 & 0 & 0\\
                    0 & 0 & 0
                \end{bmatrix}\right): r\in\mathbb{R}\right\},
            \end{align*}
            which implies that $\nabla_{\mathrm{MFN}}f(x^0;Y)$ exists but is not unique.
        \end{exampl}

        For simplicity, when Problem \eqref{pro:MFN} is feasible, we define
        \begin{equation*}
            \mathcal{G}^{x^0,Y}_{\mathrm{MFN}} = \left\{\nabla_{\mathrm{MFN}} f(x^0;Y):\left(\nabla_{\mathrm{MFN}} f(x^0;Y), \nabla^2_{\mathrm{MFN}} f(x^0;Y)\right) \in \mathcal{M}^{x^0,Y}_{\mathrm{MFN}}\right\}.
        \end{equation*}
    
        \begin{defini}
            Let $x^0\in\mathbb{R}^n$ be the point of interest and $\overline{H}\in S_n(\mathbb{R})$.  Suppose that 
            \begin{equation*}
                Y=\{x^0\}\cup\{x^0+d^i:i=1,\ldots,m\}.
            \end{equation*}
            The class of least Frobenius norm updating models of $f$ at $x^0$ over $Y$ with respect to $\overline{H}$ is
            \begin{equation*}
                \left\{m_{\mathrm{LFU}}^{x^0,Y,\overline{H}}(x) = f(x^0) + \nabla_{\mathrm{LFU}}f(x^0;Y;\overline{H})^\top\left(x-x^0\right) + \frac{1}{2}\left(x-x^0\right)^\top\nabla_{\mathrm{LFU}}^2f(x^0;Y;\overline{H})\left(x-x^0\right)\right\}
            \end{equation*}
            where $(\nabla_{\mathrm{LFU}}f(x^0;Y;\overline{H}),\nabla_{\mathrm{LFU}}^2f(x^0;Y;\overline{H}))\in\mathcal{M}_{\mathrm{LFU}}^{x^0,Y,\overline{H}}$ and $\mathcal{M}_{\mathrm{LFU}}^{x^0,Y,\overline{H}}$ is the set of minimizers of
            \begin{align}
                \begin{split}\label{pro:LFU}
                    \min_{\substack{\alpha\in\mathbb{R}^n\\ H\in S_n(\mathbb{R})}}~~~&\frac{1}{2}\left\|H-\overline{H}\right\|_F^2\\
                    s.t.~~~&\left(d^i\right)^\top\alpha+\frac{1}{2}\left(d^i\right)^\top H\left(d^i\right) = f(x^0+d^i)-f(x^0),~~~i=1,\ldots,m.
                \end{split}\tag{LFU}
            \end{align}
        \end{defini}
        \begin{rema}
            In practice, $\overline{H}$ is often set to be a reasonable approximation of $\nabla^2f(x^0)$, e.g., the model Hessian from the previous iteration.
        \end{rema}
        \begin{rema}\label{rem:LFU_unique}
            Similar to the MFN models, $\nabla_{\mathrm{LFU}}^2f(x^0;Y;\overline{H})$ exists and is unique if and only if Problem \eqref{pro:LFU} is feasible.  Moreover, $\nabla_{\mathrm{LFU}}f(x^0;Y;\overline{H})$ exists (not necessarily unique) if and only if Problem \eqref{pro:LFU} is feasible. 
        \end{rema}
        Notice that when $\overline{H}$ is the zero matrix, Problem \eqref{pro:LFU} reduces to Problem \eqref{pro:MFN}.  Hence, Example \ref{exp:MFNgradnotunique} also illustrates that, when Problem \eqref{pro:LFU} is feasible, $\nabla_{\mathrm{LFU}}f(x^0;Y;\overline{H})$ exists but may not be unique.  For simplicity, when Problem \eqref{pro:LFU} is feasible, we define
        \begin{equation*}
            \mathcal{G}^{x^0,Y,\overline{H}}_{\mathrm{LFU}} = \left\{\nabla_{\mathrm{LFU}} f(x^0;Y;\overline{H}):\left(\nabla_{\mathrm{LFU}} f(x^0;Y;\overline{H}), \nabla^2_{\mathrm{LFU}} f(x^0;Y;\overline{H})\right) \in \mathcal{M}^{x^0,Y,\overline{H}}_{\mathrm{LFU}}\right\}.
        \end{equation*}

        From the definitions, it follows that if $Y$ is poised for quadratic interpolation, then the MN, MFN, and LFU models all coincide with the DQI model.

\subsection{Generalized simplex gradient, generalized simplex Hessian, and quadratic generalized simplex derivative models}
    Simplex derivatives are often used to approximate the derivatives of the objective function and to construct approximation models.  We examine two of the most widely used simplex derivatives in this paper, namely the {\em generalized simplex gradient (GSG)} \cite{conn2008geometry,custodio2008using,custodio2007using,regis2015calculus} and {\em generalized simplex Hessian (GSH)} \cite{hare2024matrix}.
    
    \begin{defini}
        Let $x^0\in\mathbb{R}^n$ be the point of interest.  Let matrix $S=[s^1\cdots s^p]\in\mathbb{R}^{n\times p}$.  The generalized simplex gradient of $f$ at $x^0$ over $S$ is
        \begin{equation*}
            \nabla_sf(x^0;S) = \left(S^\top\right)^\dagger\delta_f(x^0;S)
        \end{equation*}
        where
        \begin{equation*}
            \delta_f(x^0;S) = \begin{bmatrix}
                f(x^0+s^1) - f(x^0)\\
                \vdots\\
                f(x^0+s^p) - f(x^0)
            \end{bmatrix}\in\mathbb{R}^p.
        \end{equation*}
    \end{defini}
    
    \begin{defini}
        Let $x^0\in\mathbb{R}^n$ be the point of interest.  Let matrices $S=[s^1\cdots s^p]\in\mathbb{R}^{n\times p}$ and $T_i=[t_i^1\cdots t_i^{q_i}]\in\mathbb{R}^{n\times q_i}$ where $i=1,\ldots,p$.  Denote $T_{1:p}=\{T_1,\ldots,T_p\}$.  The generalized simplex Hessian of $f$ at $x^0$ over $S$ and $T_{1:p}$ is
        \begin{equation*}
            \nabla_s^2f(x^0;S;T_{1:p}) = \left(S^\top\right)^\dagger\delta_f^2(x^0;S;T_{1:p})
        \end{equation*}
        where
        \begin{equation*}
            \delta_f^2(x^0;S;T_{1:p}) = \begin{bmatrix}
                \left(\nabla_sf(x^0+s^1;T_1)-\nabla_sf(x^0;T_1)\right)^\top\\
                \vdots\\
                \left(\nabla_sf(x^0+s^p;T_p)-\nabla_sf(x^0;T_p)\right)^\top
            \end{bmatrix}\in\mathbb{R}^{p\times n}.
        \end{equation*}
    \end{defini}
    \begin{rema}
        Following \cite{hare2024matrix}, if $T_1=\cdots=T_p=T$, then, with a little misuse of notation, we denote $T_{1:p}=T$. 
    \end{rema}
    
    There are several ways to construct quadratic models using the GSG and GSH.  In this paper, we define the {\em quadratic generalized simplex derivative (QGSD) models} as follows. Unlike the MN, MFN, and LFU models, the QGSD models are constructed using linear algebraic operations rather than by solving optimization problems. 
    \begin{defini}
        Let $x^0\in\mathbb{R}^n$ be the point of interest.  Let 
        \begin{align*}
            Y=\left\{x^0\right\}&\cup\left\{x^0+s^i:i=1,\ldots,p\right\}\\
            &\cup\left\{x^0+t_i^j:i=1,\ldots,p,j=1,\ldots,q_i\right\}\cup\left\{x^0+s^i+t_i^j:i=1,\ldots,p,j=1,\ldots,q_i\right\}.
        \end{align*}
        The class of quadratic generalized simplex derivative models of $f$ at $x^0$ over $Y$ is
        \begin{align*}
            \left\{m_{\mathrm{QGSD}}^{x^0,Y}(x) = f(x^0)+\nabla_{\mathrm{QGSD}}f(x^0;Y)^\top\left(x-x^0\right) +\frac{1}{2}\left(x-x^0\right)^\top\nabla^2_{\mathrm{QGSD}}f(x^0;Y)\left(x-x^0\right)\right\}
        \end{align*}
        where $\nabla_{\mathrm{QGSD}}f(x^0;Y)$ and $\nabla^2_{\mathrm{QGSD}}f(x^0;Y)$ are linear combinations of GSG and GSH constructed using points in $Y$, chosen such that the union of the points used in both constructions equals $Y$.
    \end{defini}
    \begin{rema}
        The simplest choice of $(\nabla_{\mathrm{QGSD}}f(x^0;Y),\nabla^2_{\mathrm{QGSD}}f(x^0;Y))$ is
        \begin{equation*}
            \nabla_{\mathrm{QGSD}}f(x^0;Y) = \nabla_sf(x^0;S)~~~\text{and}~~~\nabla^2_{\mathrm{QGSD}}f(x^0;Y) = \nabla^2_sf(x^0;S;T_{1:p}),
        \end{equation*}
        where $\nabla^2_{\mathrm{QGSD}}f(x^0;Y)$ can be replaced by $\frac{1}{2}(\nabla^2_sf(x^0;S;T_{1:p})+\nabla^2_sf(x^0;S;T_{1:p})^\top)$ if a symmetric model Hessian is desired.
        
        An alternative choice is studied in \cite{chen2024qfully}, where the authors assume $T_{1:p}=S$ and let
        \begin{equation}\label{eq:qgsdexp}
            \nabla_{\mathrm{QGSD}}f(x^0;Y) = 2\nabla_sf(x^0;S)-\nabla_sf(x^0;2S)~~~\text{and}~~~\nabla^2_{\mathrm{QGSD}}f(x^0;Y) = \nabla^2_sf(x^0;S;T_{1:p}).
        \end{equation}
        It is shown that when $S$ has full column rank, the QGSD model corresponding to \eqref{eq:qgsdexp} is an underdetermined quadratic interpolation model of $f$ at $x^0$ over $Y$; see \cite[Theorem~1]{chen2024qfully} for details.
    \end{rema}

\section{Full-space and subspace MN, MFN, and LFU models}\label{sec:full&subspaceUnderdModels}
    In this section, we explore the relationships between the full-space and subspace MN, MFN, and LFU models, and we derive conversion formulas for each subspace model and its corresponding full-space counterpart.  

    We shall use the following assumptions.
    \begin{assump}\label{ass:Yisinsubspace}
        Suppose that $Y=\{x^0\}\cup\{x^0+d^i:i=1,\ldots,m\}$ lies in the $d$-dimensional affine space $\mathcal{Y}=\{x^0+Q\widehat{x}:\widehat{x}\in\mathbb{R}^d\}$ where $d<n$ and $Q\in\mathbb{R}^{n\times d}$ has $d$ orthonormal columns.  
    \end{assump}
    \begin{assump}\label{ass:interpexists}
        Suppose that there exists a quadratic function that interpolates $f$ on $Y$.
    \end{assump}
    Under Assumption \ref{ass:Yisinsubspace}, for each $d^i$, there exists $\widehat{d}^i\in\mathbb{R}^d$ such that $d^i=Q\widehat{d}^i$.  We denote $\widehat{Y}=\{\mymathbb{0}\}\cup\{\widehat{d}^i:i=1,\ldots,m\}$ and define $\widehat{f}(\widehat{x})=f(x^0+Q\widehat{x})$.  Assumption \ref{ass:interpexists} is equivalent to assuming that there exists a quadratic function that interpolates $\widehat{f}$ on $\widehat{Y}$.  Throughout this paper, we refer to the model of $f$ over $Y$ as the full-space model, and the model of $\widehat{f}$ over $\widehat{Y}$ as the subspace model.

    We begin by analyzing the relationships between the full-space and subspace MN models.  We show that the two models coincide on $\mathcal{Y}$ and along directions in $\mathrm{col}(Q)^\perp$.  Notice that by Remark~\ref{rem:MN_unique}, under Assumptions~\ref{ass:Yisinsubspace} and \ref{ass:interpexists}, both $(\nabla_{\mathrm{MN}}f(x^0;Y),\nabla_{\mathrm{MN}}^2f(x^0;Y))$ and $(\nabla_{\mathrm{MN}}\widehat{f}(\mymathbb{0};\widehat{Y}),\nabla_{\mathrm{MN}}^2\widehat{f}(\mymathbb{0};\widehat{Y}))$ exist and are unique. 
    \begin{theo}\label{thm:full&subspaceMN}
         Suppose that Assumptions~\ref{ass:Yisinsubspace} and \ref{ass:interpexists} hold. Then, both $(\nabla_{\mathrm{MN}}f(x^0;Y),\nabla_{\mathrm{MN}}^2f(x^0;Y))$ and $(\nabla_{\mathrm{MN}}\widehat{f}(\mymathbb{0};\widehat{Y}),\nabla_{\mathrm{MN}}^2\widehat{f}(\mymathbb{0};\widehat{Y}))$ are unique and satisfy
        \begin{equation*}
            \left(\nabla_{\mathrm{MN}}f(x^0;Y), \nabla_{\mathrm{MN}}^2f(x^0;Y)\right) = \left(Q\nabla_{\mathrm{MN}}\widehat{f}(\mymathbb{0};\widehat{Y}), Q\nabla_{\mathrm{MN}}^2\widehat{f}(\mymathbb{0};\widehat{Y})Q^\top\right).
        \end{equation*}
        Moreover, 
        \begin{equation}\label{eq:samemodelvalMN}
            m_{\mathrm{MN}}^{x^0,Y}(x)=\widehat{m}_{\mathrm{MN}}^{\mymathbb{0},\widehat{Y}}(\widehat{x})~~~\text{for all $(x,\widehat{x})\in\mathbb{R}^n\times\mathbb{R}^d$ with $x\in \left(x^0+Q\widehat{x}\right)+\mathrm{col}(Q)^\perp$}.
        \end{equation}
    \end{theo}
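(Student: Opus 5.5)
Uniqueness follows from Remark~\ref{rem:MN_unique}: under Assumption~\ref{ass:interpexists} both Problem~\eqref{pro:MN} and its subspace version (which is feasible by the equivalence noted after Assumption~\ref{ass:interpexists}) have strongly convex objectives over nonempty affine feasible sets. So it suffices to show that the pair $(\alpha^*,H^*)=(Q\widehat{\alpha},Q\widehat{H}Q^\top)$, where $(\widehat{\alpha},\widehat{H})$ is the subspace MN solution, solves the full-space Problem~\eqref{pro:MN}.

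\textbf{Feasibility.} Since $d^i=Q\widehat{d}^i$ and $Q^\top Q=I_d$, we have
\[
(d^i)^\top Q\widehat{\alpha}+\tfrac12(d^i)^\top Q\widehat{H}Q^\top d^i=(\widehat{d}^i)^\top\widehat{\alpha}+\tfrac12(\widehat{d}^i)^\top\widehat{H}\widehat{d}^i .
\]
This equals $\widehat{f}(\widehat{d}^i)-\widehat{f}(\mymathbb{0})=f(x^0+d^i)-f(x^0)$. Also $Q\widehat{H}Q^\top$ is symmetric.

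\textbf{Optimality.} Complete $Q$ to an orthogonal matrix $[Q\;Q_\perp]$. Take any feasible $(\alpha,H)$ and decompose
\[
\alpha=Q\alpha_1+Q_\perp\alpha_2,\qquad H=[Q\;Q_\perp]\begin{bmatrix}H_{11}&H_{12}\\H_{12}^\top&H_{22}\end{bmatrix}[Q\;Q_\perp]^\top .
\]
Because $Q_\perp^\top d^i=\mymathbb{0}$, the constraints depend only on $(\alpha_1,H_{11})$ with $\alpha_1=Q^\top\alpha$ and $H_{11}=Q^\top HQ\in S_d(\mathbb{R})$. They read exactly as the subspace constraints, so $(\alpha_1,H_{11})$ is feasible for the subspace problem. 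By orthogonal invariance of the norms,
\[
\tfrac12\|\alpha\|^2+\tfrac12\|H\|_F^2=\tfrac12\|\alpha_1\|^2+\tfrac12\|H_{11}\|_F^2+\tfrac12\|\alpha_2\|^2+\|H_{12}\|_F^2+\tfrac12\|H_{22}\|_F^2 .
\]
This is at least the subspace optimal value $\tfrac12\|\widehat{\alpha}\|^2+\tfrac12\|\widehat{H}\|_F^2$, which is precisely the full objective value at $(Q\widehat{\alpha},Q\widehat{H}Q^\top)$. Hence this pair is optimal, and by uniqueness it equals $(\nabla_{\mathrm{MN}}f(x^0;Y),\nabla^2_{\mathrm{MN}}f(x^0;Y))$. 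The main point needing care is this block decomposition, in particular checking that the constraints involve only the $(1,1)$ block and $Q^\top\alpha$. That is immediate from $d^i\in\mathrm{col}(Q)$.

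\textbf{Model values, \eqref{eq:samemodelvalMN}.} Write $x-x^0=Q\widehat{x}+w$ with $w\in\mathrm{col}(Q)^\perp$, so $Q^\top(x-x^0)=\widehat{x}$. Substituting the formulas just proved gives
\[
\nabla_{\mathrm{MN}}f(x^0;Y)^\top(x-x^0)=\widehat{\alpha}^\top Q^\top(x-x^0)=\widehat{\alpha}^\top\widehat{x},
\]
\[
(x-x^0)^\top Q\widehat{H}Q^\top(x-x^0)=\widehat{x}^\top\widehat{H}\widehat{x}.
\]
Together with $f(x^0)=\widehat{f}(\mymathbb{0})$, this yields $m_{\mathrm{MN}}^{x^0,Y}(x)=\widehat{m}_{\mathrm{MN}}^{\mymathbb{0},\widehat{Y}}(\widehat{x})$.
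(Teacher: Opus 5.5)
Your proposal is correct and follows essentially the same route as the paper. Both arguments decompose $\alpha$ and $H$ along $[Q\;Q_\perp]$, use that the constraints involve only $Q^\top\alpha$ and $Q^\top HQ$ while the norms split orthogonally (so the $\mathrm{col}(Q)^\perp$ components vanish at the optimum), and then compute the model values directly. The only cosmetic difference is that you certify the candidate $(Q\widehat{\alpha},Q\widehat{H}Q^\top)$ by an explicit lower bound plus uniqueness, whereas the paper rewrites Problem~\eqref{pro:MN} in block coordinates and reads off that the extra blocks must be zero.
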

    \begin{proof}
        Denote by $Q_{\perp}\in\mathbb{R}^{n\times (n-d)}$ a matrix with orthonormal columns spanning $\mathrm{col}(Q)^\perp$.  For all $\alpha\in\mathbb{R}^n$ and $H\in S_n(\mathbb{R})$, we can decompose
    \begin{equation*}
        \alpha=Q\widehat{\alpha}+Q_{\perp}\widehat{\alpha}_{\perp}~~~\text{and}~~~H=Q\widehat{H}Q^\top + Q\widehat{H}_0Q_{\perp}^\top + Q_{\perp}\widehat{H}_0^\top Q^\top + Q_{\perp}\widehat{H}_{\perp}Q_{\perp}^\top,
    \end{equation*}
    where $\widehat{\alpha}=Q^\top\alpha\in\mathbb{R}^d$, $\widehat{\alpha}_{\perp}=Q_{\perp}^\top\alpha\in\mathbb{R}^{n-d}$, $\widehat{H}=Q^\top HQ\in S_d(\mathbb{R})$, $\widehat{H}_0=Q^\top HQ_{\perp}\in\mathbb{R}^{d\times (n-d)}$, and $\widehat{H}_{\perp}=Q_{\perp}^\top HQ_{\perp}\in S_{n-d}(\mathbb{R})$.  We have that
    \begin{equation*}
        \left\|\alpha\right\|^2=\left\|\widehat{\alpha}\right\|^2+\left\|\widehat{\alpha}_{\perp}\right\|^2~~~\text{and}~~~\left\|H\right\|_F^2=\left\|\widehat{H}\right\|_F^2+2\left\|\widehat{H}_0\right\|_F^2+\left\|\widehat{H}_{\perp}\right\|_F^2
    \end{equation*}
    by the orthogonality of the columns of $Q$, and
    \begin{equation}\label{eq:fullsubsameconstraint}
        \left(d^i\right)^\top\alpha+\frac{1}{2}\left(d^i\right)^\top H\left(d^i\right) = \left(\widehat{d}^i\right)^\top Q^\top\alpha+\frac{1}{2}\left(\widehat{d}^i\right)^\top Q^\top HQ\left(\widehat{d}^i\right) = \left(\widehat{d}^i\right)^\top \widehat{\alpha}+\frac{1}{2}\left(\widehat{d}^i\right)^\top \widehat{H}\left(\widehat{d}^i\right).
    \end{equation}
    Therefore, Problem \eqref{pro:MN} becomes
    \begin{align*}
        \min_{\substack{\widehat{\alpha}\in\mathbb{R}^d,\widehat{H}\in S_d(\mathbb{R})\\ \widehat{\alpha}_{\perp}\in\mathbb{R}^{n-d}\\ \widehat{H}_0\in\mathbb{R}^{d\times (n-d)},\widehat{H}_{\perp}\in S_{n-d}(\mathbb{R})}}~~~&\frac{1}{2}\left\|\widehat{\alpha}\right\|^2+\frac{1}{2}\left\|\widehat{\alpha}_{\perp}\right\|^2+\frac{1}{2}\left\|\widehat{H}\right\|_F^2+\left\|\widehat{H}_0\right\|_F^2+\frac{1}{2}\left\|\widehat{H}_{\perp}\right\|_F^2\\
        s.t.~~~&\left(\widehat{d}^i\right)^\top \widehat{\alpha}+\frac{1}{2}\left(\widehat{d}^i\right)^\top \widehat{H}\left(\widehat{d}^i\right) = \widehat{f}(\widehat{d}^i)-\widehat{f}(\mymathbb{0}),~~~i=1,\ldots,m.
    \end{align*}
    Since the constraints do not contain $\widehat{\alpha}_\perp$, $\widehat{H}_0$, or $\widehat{H}_\perp$, these must be zero at the optimal solution.  Therefore, we have that
    \begin{equation*}
        \left(\nabla_{\mathrm{MN}}f(x^0;Y), \nabla_{\mathrm{MN}}^2f(x^0;Y)\right) = \left(Q\nabla_{\mathrm{MN}}\widehat{f}(\mymathbb{0};\widehat{Y}), Q\nabla_{\mathrm{MN}}^2\widehat{f}(\mymathbb{0};\widehat{Y})Q^\top\right).
    \end{equation*}

    Finally, let $x\in (x^0+Q\widehat{x})+\mathrm{col}(Q)^\perp$, i.e., $x=x^0+Q\widehat{x}+v$ for some $v\in\mathrm{col}(Q)^\perp$.  We have
    \begin{align*}
        m_{\mathrm{MN}}^{x^0,Y}(x) &= f(x^0)+\nabla_{\mathrm{MN}}f(x^0;Y)^\top \left(Q\widehat{x}+v\right)+\frac{1}{2}\left(\widehat{x}^\top Q^\top+v^\top\right)\nabla_{\mathrm{MN}}^2f(x^0;Y)\left(Q\widehat{x}+v\right)\\
        &= \widehat{f}(\mymathbb{0})+\nabla_{\mathrm{MN}}\widehat{f}(\mymathbb{0};\widehat{Y})^\top Q^\top \left(Q\widehat{x}+v\right)+\frac{1}{2}\left(\widehat{x}^\top Q^\top + v^\top\right)Q\nabla_{\mathrm{MN}}^2\widehat{f}(\mymathbb{0};\widehat{Y})Q^\top \left(Q\widehat{x}+v\right)\\
        &= \widehat{f}(\mymathbb{0})+\nabla_{\mathrm{MN}}\widehat{f}(\mymathbb{0};\widehat{Y})^\top Q^\top Q\widehat{x}+\frac{1}{2}\widehat{x}^\top Q^\top Q\nabla_{\mathrm{MN}}^2\widehat{f}(\mymathbb{0};\widehat{Y})Q^\top Q\widehat{x}\\
        &= \widehat{f}(\mymathbb{0})+\nabla_{\mathrm{MN}}\widehat{f}(\mymathbb{0};\widehat{Y})^\top\widehat{x}+\frac{1}{2}\widehat{x}^\top\nabla_{\mathrm{MN}}^2\widehat{f}(\mymathbb{0};\widehat{Y})\widehat{x}\\
        &= \widehat{m}_{\mathrm{MN}}^{\mymathbb{0},\widehat{Y}}(\widehat{x}).
    \end{align*}
    \end{proof}

    A special case of Theorem \ref{thm:full&subspaceMN} worth mentioning occurs when $\widehat{Y}$ is poised for quadratic interpolation.  In this case, the subspace MN model coincides with the subspace DQI model, i.e, $\widehat{m}_{\mathrm{MN}}^{\mymathbb{0},\widehat{Y}}(\widehat{x})=\widehat{m}_{\mathrm{DQI}}^{\mymathbb{0},\widehat{Y}}(\widehat{x})$.  The following corollary generalizes \cite[Theorem~3]{chen2024qfully}.  In particular, \cite[Theorem 3]{chen2024qfully} only shows that the full-space model corresponding to $\widehat{m}_{\mathrm{DQI}}^{\mymathbb{0},\widehat{Y}}(\widehat{x})$ is an underdetermined quadratic interpolation model of $f$ at $x^0$ over~$Y$, while the following corollary shows that it is actually the MN model of $f$ at $x^0$ over $Y$.
    \begin{coroll}
        Suppose that Assumptions~\ref{ass:Yisinsubspace} and \ref{ass:interpexists} hold.  Suppose that $\widehat{Y}$ is poised for quadratic interpolation.  Then,
        \begin{equation*}
            \left(\nabla_{\mathrm{MN}}f(x^0;Y), \nabla_{\mathrm{MN}}^2f(x^0;Y)\right) = \left(Q\nabla_{\mathrm{DQI}}\widehat{f}(\mymathbb{0};\widehat{Y}), Q\nabla_{\mathrm{DQI}}^2\widehat{f}(\mymathbb{0};\widehat{Y})Q^\top\right).
        \end{equation*}
        Moreover, 
        \begin{equation*}
            m_{\mathrm{MN}}^{x^0,Y}(x)=\widehat{m}_{\mathrm{DQI}}^{\mymathbb{0},\widehat{Y}}(\widehat{x})~~~\text{for all $(x,\widehat{x})\in\mathbb{R}^n\times\mathbb{R}^d$ with $x\in \left(x^0+Q\widehat{x}\right)+\mathrm{col}(Q)^\perp$}.
        \end{equation*}
    \end{coroll}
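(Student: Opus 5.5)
The corollary follows directly from Theorem~\ref{thm:full&subspaceMN}, combined with the observation that, for a poised set, the MN model coincides with the DQI model. The plan is to establish $(\nabla_{\mathrm{MN}}\widehat{f}(\mymathbb{0};\widehat{Y}),\nabla_{\mathrm{MN}}^2\widehat{f}(\mymathbb{0};\widehat{Y}))=(\nabla_{\mathrm{DQI}}\widehat{f}(\mymathbb{0};\widehat{Y}),\nabla_{\mathrm{DQI}}^2\widehat{f}(\mymathbb{0};\widehat{Y}))$ and then substitute into the conversion formula and into \eqref{eq:samemodelvalMN}.

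\textbf{Step 1: the subspace MN and DQI models coincide.} Since $\widehat{Y}$ is poised for quadratic interpolation in $\mathbb{R}^d$, Definition~\ref{def:detquad} gives a unique quadratic $\widehat{m}_{\mathrm{DQI}}^{\mymathbb{0},\widehat{Y}}$ interpolating $\widehat{f}$ on $\widehat{Y}$. Expanding it about $\mymathbb{0}$, its constant term equals $\widehat{f}(\mymathbb{0})$ because $\mymathbb{0}\in\widehat{Y}$. Its gradient and Hessian at $\mymathbb{0}$, which I denote $\nabla_{\mathrm{DQI}}\widehat{f}(\mymathbb{0};\widehat{Y})$ and $\nabla^2_{\mathrm{DQI}}\widehat{f}(\mymathbb{0};\widehat{Y})$, therefore satisfy the constraints of Problem~\eqref{pro:MN} for $\widehat{f}$ and $\widehat{Y}$. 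Conversely, every feasible pair $(\widehat{\alpha},\widehat{H})$ of Problem~\eqref{pro:MN} defines a quadratic $\widehat{f}(\mymathbb{0})+\widehat{\alpha}^\top\widehat{x}+\frac12\widehat{x}^\top\widehat{H}\widehat{x}$ that interpolates $\widehat{f}$ on $\widehat{Y}$. By uniqueness of the interpolant, the feasible set is therefore a single point, namely the DQI pair. Hence that pair is the MN minimizer, which also agrees with Remark~\ref{rem:MN_unique}.

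\textbf{Step 2: conversion formula.} Assumptions~\ref{ass:Yisinsubspace} and \ref{ass:interpexists} hold, so Theorem~\ref{thm:full&subspaceMN} gives $(\nabla_{\mathrm{MN}}f(x^0;Y),\nabla^2_{\mathrm{MN}}f(x^0;Y))=(Q\nabla_{\mathrm{MN}}\widehat{f}(\mymathbb{0};\widehat{Y}),Q\nabla^2_{\mathrm{MN}}\widehat{f}(\mymathbb{0};\widehat{Y})Q^\top)$. Substituting the identity from Step~1 yields the first claim.

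\textbf{Step 3: model values.} Step~1 gives $\widehat{m}_{\mathrm{MN}}^{\mymathbb{0},\widehat{Y}}=\widehat{m}_{\mathrm{DQI}}^{\mymathbb{0},\widehat{Y}}$ as functions on $\mathbb{R}^d$. Combining this with \eqref{eq:samemodelvalMN} gives the second claim.

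The only point needing care is Step~1, specifically identifying feasible pairs of Problem~\eqref{pro:MN} with interpolating quadratics. This relies on the constant term being fixed at $\widehat{f}(\mymathbb{0})$, which holds because $\mymathbb{0}\in\widehat{Y}$. Once that identification is in place, uniqueness of the interpolant collapses the feasible set to a single point and the rest is direct substitution.
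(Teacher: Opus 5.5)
Your proposal is correct and matches the paper's reasoning: the corollary follows by combining Theorem~\ref{thm:full&subspaceMN} with the fact that, when $\widehat{Y}$ is poised, the subspace MN model coincides with the subspace DQI model. Your Step~1 spells out what the paper only asserts ``from the definitions'', namely that poisedness collapses the feasible set of Problem~\eqref{pro:MN} to the single DQI pair.
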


    We can prove results similar to Theorem \ref{thm:full&subspaceMN} for Hessian approximations arising from the MFN and LFU models.  As noted in Remarks \ref{rem:MFN_unique} and \ref{rem:LFU_unique} and illustrated in Example \ref{exp:MFNgradnotunique}, Assumption \ref{ass:interpexists} alone is not sufficient to ensure the uniqueness of the gradient approximation for either the MFN or LFU model.  Nonetheless, we characterize the exact relationship between the sets of all gradient approximations arising from the full-space and subspace MFN and LFU models.  Moreover, when their gradient approximations satisfy a simple relationship, we obtain model value relationships analogous to \eqref{eq:samemodelvalMN}.
    \begin{theo}\label{thm:full&subspaceMFN}
        Suppose that Assumptions~\ref{ass:Yisinsubspace} and \ref{ass:interpexists} hold.  Then, both $\nabla_{\mathrm{MFN}}^2f(x^0;Y)$ and $\nabla_{\mathrm{MFN}}^2\widehat{f}(\mymathbb{0};\widehat{Y})$ are unique and satisfy
        \begin{equation*}
            \nabla_{\mathrm{MFN}}^2f(x^0;Y) = Q\nabla_{\mathrm{MFN}}^2\widehat{f}(\mymathbb{0};\widehat{Y})Q^\top.
        \end{equation*}
        The sets $\mathcal{G}_{\mathrm{MFN}}^{x^0,Y}$ and  $\mathcal{G}_{\mathrm{MFN}}^{\mymathbb{0},\widehat{Y}}$ satisfy
        \begin{equation}\label{eq:MFN_gradrelation}
            \mathcal{G}_{\mathrm{MFN}}^{x^0,Y}=\left\{Q\alpha:\alpha\in\mathcal{G}_{\mathrm{MFN}}^{\mymathbb{0},\widehat{Y}}\right\} + \mathrm{col}(Q)^\perp.
        \end{equation}
        Moreover, for any pair of elements $(\nabla_{\mathrm{MFN}}f(x^0;Y),\nabla_{\mathrm{MFN}}\widehat{f}(\mymathbb{0};\widehat{Y}))\in\mathcal{G}_{\mathrm{MFN}}^{x^0,Y}\times\mathcal{G}_{\mathrm{MFN}}^{\mymathbb{0},\widehat{Y}}$ such that $\nabla_{\mathrm{MFN}}f(x^0;Y)-Q\nabla_{\mathrm{MFN}}\widehat{f}(\mymathbb{0};\widehat{Y})\in\mathrm{col}(Q)^\perp$, the corresponding models satisfy
        \begin{equation*}
            m_{\mathrm{MFN}}^{x^0,Y}(x)=\widehat{m}_{\mathrm{MFN}}^{\mymathbb{0},\widehat{Y}}(\widehat{x})~~~\text{for all $(x,\widehat{x})\in\mathbb{R}^n\times\mathbb{R}^d$ with $x=x^0+Q\widehat{x}$}.
        \end{equation*}
        In particular, if $\nabla_{\mathrm{MFN}}f(x^0;Y)=Q\nabla_{\mathrm{MFN}}\widehat{f}(\mymathbb{0};\widehat{Y})$, then
        \begin{equation*}
            m_{\mathrm{MFN}}^{x^0,Y}(x)=\widehat{m}_{\mathrm{MFN}}^{\mymathbb{0},\widehat{Y}}(\widehat{x})~~~\text{for all $(x,\widehat{x})\in\mathbb{R}^n\times\mathbb{R}^d$ with $x\in \left(x^0+Q\widehat{x}\right)+\mathrm{col}(Q)^\perp$}.
        \end{equation*}
    \end{theo}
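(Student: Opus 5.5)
I would follow the proof of Theorem~\ref{thm:full&subspaceMN} closely. First, I would fix $Q_\perp\in\mathbb{R}^{n\times(n-d)}$ with orthonormal columns spanning $\mathrm{col}(Q)^\perp$. Then I would decompose every feasible pair as
\[
\alpha=Q\widehat{\alpha}+Q_\perp\widehat{\alpha}_\perp,\qquad
H=Q\widehat{H}Q^\top+Q\widehat{H}_0Q_\perp^\top+Q_\perp\widehat{H}_0^\top Q^\top+Q_\perp\widehat{H}_\perp Q_\perp^\top .
\]
By \eqref{eq:fullsubsameconstraint}, the constraints of Problem~\eqref{pro:MFN} involve only $(\widehat{\alpha},\widehat{H})$, and they coincide with the subspace constraints for $\widehat{f}$ over $\widehat{Y}$. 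The objective splits as $\frac12\|\widehat{H}\|_F^2+\|\widehat{H}_0\|_F^2+\frac12\|\widehat{H}_\perp\|_F^2$, and it does not involve $\alpha$ at all.

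The key step is to characterize the minimizer set exactly. I would show that $(\alpha,H)\in\mathcal{M}^{x^0,Y}_{\mathrm{MFN}}$ if and only if three conditions hold: $\widehat{H}_0=0$, $\widehat{H}_\perp=0$, and $(\widehat{\alpha},\widehat{H})\in\mathcal{M}^{\mymathbb{0},\widehat{Y}}_{\mathrm{MFN}}$, with $\widehat{\alpha}_\perp$ arbitrary.

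For the ``if'' direction, take any full-space feasible point. Its projected pair $(\widehat{\alpha},\widehat{H})$ is subspace-feasible, so the full objective is at least $\frac12\|\widehat{H}\|_F^2$, which in turn is at least the subspace optimal value. Conversely, every subspace minimizer, lifted with zero off-blocks and arbitrary $\widehat{\alpha}_\perp$, is full-space feasible and attains that value. Hence the two optimal values agree.

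For the ``only if'' direction, suppose a full-space minimizer has $\widehat{H}_0\neq 0$ or $\widehat{H}_\perp\neq 0$. Then its objective strictly exceeds $\frac12\|\widehat{H}\|_F^2$, which is at least the common optimal value, a contradiction. So $\widehat{H}_0=0$ and $\widehat{H}_\perp=0$, and $(\widehat{\alpha},\widehat{H})$ must attain the subspace optimum. This gives the characterization.

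From the characterization the conclusions follow:
\begin{enumerate}
\item Uniqueness of both Hessians comes from Remark~\ref{rem:MFN_unique}, since both problems are feasible under Assumption~\ref{ass:interpexists}. The characterization then gives $\nabla^2_{\mathrm{MFN}}f(x^0;Y)=Q\nabla^2_{\mathrm{MFN}}\widehat{f}(\mymathbb{0};\widehat{Y})Q^\top$.
\item Identity \eqref{eq:MFN_gradrelation} follows because $\alpha=Q\widehat{\alpha}+Q_\perp\widehat{\alpha}_\perp$ with $\widehat{\alpha}$ ranging over $\mathcal{G}^{\mymathbb{0},\widehat{Y}}_{\mathrm{MFN}}$ and $\widehat{\alpha}_\perp$ ranging over all of $\mathbb{R}^{n-d}$.
\end{enumerate}
The step needing most care is this set identity. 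I must ensure that the subspace gradient set consists exactly of those $\widehat{\alpha}$ that pair with the unique subspace Hessian. This holds because the full-space Hessian block $\widehat{H}$ is forced to equal the unique subspace minimizing Hessian, so the feasible $\widehat{\alpha}$'s are precisely the elements of $\mathcal{G}^{\mymathbb{0},\widehat{Y}}_{\mathrm{MFN}}$.

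For the model values, write $g=\nabla_{\mathrm{MFN}}f(x^0;Y)=Q\widehat{g}+w$ with $w\in\mathrm{col}(Q)^\perp$. For $x=x^0+Q\widehat{x}$, the term $w^\top Q\widehat{x}$ vanishes. The Hessian term reduces via $Q^\top Q=I_d$, exactly as in the computation at the end of the proof of Theorem~\ref{thm:full&subspaceMN}, so the two models agree on $\mathcal{Y}$.

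In the special case $w=0$, take $x=x^0+Q\widehat{x}+v$ with $v\in\mathrm{col}(Q)^\perp$. Every term involving $v$ vanishes because $Q^\top v=0$. Repeating that computation verbatim then gives agreement on $(x^0+Q\widehat{x})+\mathrm{col}(Q)^\perp$.
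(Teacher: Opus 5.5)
Your proposal is correct and follows essentially the same route as the paper: the block decomposition of $(\alpha,H)$ along $\mathrm{col}(Q)$ and $\mathrm{col}(Q)^\perp$, the observation that the constraints involve only $(\widehat{\alpha},\widehat{H})$ so that $\widehat{H}_0,\widehat{H}_\perp$ vanish and $\widehat{\alpha}_\perp$ is free, and then the same model-value computation using $Q^\top Q=I_d$ and $Q^\top v=\mymathbb{0}$. Your explicit optimal-value comparison and two-sided characterization of $\mathcal{M}^{x^0,Y}_{\mathrm{MFN}}$ spell out what the paper compresses into ``$\widehat{H}_0$ and $\widehat{H}_{\perp}$ vanish at the optimal solution'' together with its feasibility argument for the gradient set.
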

    \begin{proof}
        Similar to the proof of Theorem \ref{thm:full&subspaceMN}, we can write Problem \eqref{pro:MFN} as
        \begin{align*}
            \begin{split}\label{pro:MFN'}
                \min_{\substack{\widehat{\alpha}\in\mathbb{R}^d,\widehat{H}\in S_d(\mathbb{R})\\ \widehat{\alpha}_{\perp}\in\mathbb{R}^{n-d}\\ \widehat{H}_0\in\mathbb{R}^{d\times (n-d)},\widehat{H}_{\perp}\in S_{n-d}(\mathbb{R})}}~~~&\frac{1}{2}\left\|\widehat{H}\right\|_F^2+\left\|\widehat{H}_0\right\|_F^2+\frac{1}{2}\left\|\widehat{H}_{\perp}\right\|_F^2\\
                s.t.~~~&\left(\widehat{d}^i\right)^\top \widehat{\alpha}+\frac{1}{2}\left(\widehat{d}^i\right)^\top \widehat{H}\left(\widehat{d}^i\right) = \widehat{f}(\widehat{d}^i)-\widehat{f}(\mymathbb{0}),~~~i=1,\ldots,m,
            \end{split}\tag{MFN'}
        \end{align*}
        which implies that $\widehat{H}_0$ and $\widehat{H}_{\perp}$ vanish at the optimal solution and so
        \begin{equation*}
            \nabla_{\mathrm{MFN}}^2f(x^0;Y) = Q\nabla_{\mathrm{MFN}}^2\widehat{f}(\mymathbb{0};\widehat{Y})Q^\top.
        \end{equation*}

        Next, from the deduction of Equation \eqref{eq:fullsubsameconstraint}, we have that $(\alpha,\nabla^2_{\mathrm{MFN}}f(x^0;Y))$ is feasible for Problem~\eqref{pro:MFN} if and only if $\alpha=Q\widehat{\alpha}+v$ where $(\widehat{\alpha},\nabla^2_{\mathrm{MFN}}\widehat{f}(\mymathbb{0};\widehat{Y}))$ satisfies the constraints of Problem~\eqref{pro:MFN'} and $v\in\mathrm{col}(Q)^\perp$.  Equation \eqref{eq:MFN_gradrelation} follows from the fact that $\alpha\in\mathcal{G}_{\mathrm{MFN}}^{x^0,Y}$ if and only if $(\alpha,\nabla^2_{\mathrm{MFN}}f(x^0;Y))$ is feasible for Problem~\eqref{pro:MFN}, and $\widehat{\alpha}\in\mathcal{G}_{\mathrm{MFN}}^{\mymathbb{0},\widehat{Y}}$ if and only if $(\widehat{\alpha},\nabla^2_{\mathrm{MFN}}\widehat{f}(\mymathbb{0};\widehat{Y}))$ satisfies the constraints of Problem~\eqref{pro:MFN'}.

        Finally, let $m_{\mathrm{MFN}}^{x^0,Y}$ and $\widehat{m}_{\mathrm{MFN}}^{\mymathbb{0},\widehat{Y}}$ be the models with $\nabla_{\mathrm{MFN}}f(x^0;Y)=Q\nabla_{\mathrm{MFN}}\widehat{f}(\mymathbb{0};\widehat{Y})+v$ for some $v\in\mathrm{col}(Q)^\perp$.  Let $x=x^0+Q\widehat{x}$.  We have
        \begin{align*}
            m_{\mathrm{MFN}}^{x^0,Y}(x) &= f(x^0)+\nabla_{\mathrm{MFN}}f(x^0;Y)^\top Q\widehat{x}+\frac{1}{2}\widehat{x}^\top Q^\top\nabla_{\mathrm{MFN}}^2f(x^0;Y)Q\widehat{x}\\
            &= \widehat{f}(\mymathbb{0})+\left(\nabla_{\mathrm{MFN}}\widehat{f}(\mymathbb{0};\widehat{Y})^\top Q^\top + v^\top\right) Q\widehat{x}+\frac{1}{2}\widehat{x}^\top Q^\top Q\nabla_{\mathrm{MFN}}^2\widehat{f}(\mymathbb{0};\widehat{Y})Q^\top Q\widehat{x}\\
            &= \widehat{f}(\mymathbb{0})+\nabla_{\mathrm{MFN}}\widehat{f}(\mymathbb{0};\widehat{Y})^\top Q^\top Q\widehat{x}+\frac{1}{2}\widehat{x}^\top Q^\top Q\nabla_{\mathrm{MFN}}^2\widehat{f}(\mymathbb{0};\widehat{Y})Q^\top Q\widehat{x}\\
            &= \widehat{f}(\mymathbb{0})+\nabla_{\mathrm{MFN}}\widehat{f}(\mymathbb{0};\widehat{Y})^\top\widehat{x}+\frac{1}{2}\widehat{x}^\top\nabla_{\mathrm{MFN}}^2\widehat{f}(\mymathbb{0};\widehat{Y})\widehat{x}\\
            &= \widehat{m}_{\mathrm{MFN}}^{\mymathbb{0},\widehat{Y}}(\widehat{x}).
        \end{align*}
        In particular, if $\nabla_{\mathrm{MFN}}f(x^0;Y)=Q\nabla_{\mathrm{MFN}}\widehat{f}(\mymathbb{0};\widehat{Y})$, then the result follows from the proof of~\eqref{eq:samemodelvalMN}.
    \end{proof}

    The results for the LFU models are similar to those for the MFN models.  A key distinction is the presence of the reference matrix $\overline{H}$, which can fix the model’s behavior outside $\mathcal{Y}$.  Consequently, while the full-space and subspace LFU models coincide on $\mathcal{Y}$, there is no general guarantee that they coincide along directions in $\mathrm{col}(Q)^\perp$ unless $\overline{H}=QQ^\top\overline{H}QQ^\top$.
    \begin{theo}\label{thm:full&subspaceLFU}
        Suppose that Assumptions~\ref{ass:Yisinsubspace} and \ref{ass:interpexists} hold.  Denote $\widehat{\overline{H}}=Q^\top\overline{H}Q\in S_d(\mathbb{R})$.  Then, both $\nabla_{\mathrm{LFU}}^2f(x^0;Y;\overline{H})$ and $\nabla_{\mathrm{LFU}}^2\widehat{f}(\mymathbb{0};\widehat{Y};\widehat{\overline{H}})$ are unique and satisfy
        \begin{equation*}
            \nabla_{\mathrm{LFU}}^2f(x^0;Y;\overline{H}) = Q\nabla_{\mathrm{LFU}}^2\widehat{f}(\mymathbb{0};\widehat{Y};\widehat{\overline{H}})Q^\top + \overline{H} - QQ^\top\overline{H}QQ^\top.
        \end{equation*}
        The sets $\mathcal{G}_{\mathrm{LFU}}^{x^0,Y,\overline{H}}$ and $\mathcal{G}_{\mathrm{LFU}}^{\mymathbb{0},\widehat{Y},\widehat{\overline{H}}}$ satisfy
        \begin{equation*}
            \mathcal{G}_{\mathrm{LFU}}^{x^0,Y,\overline{H}} = \left\{Q\alpha:\alpha\in\mathcal{G}_{\mathrm{LFU}}^{\mymathbb{0},\widehat{Y},\widehat{\overline{H}}}\right\} + \mathrm{col}(Q)^\perp.
        \end{equation*}
        Moreover, for any pair of elements $(\nabla_{\mathrm{LFU}}f(x^0;Y;\overline{H}),\nabla_{\mathrm{LFU}}\widehat{f}(\mymathbb{0};\widehat{Y};\widehat{\overline{H}}))\in\mathcal{G}_{\mathrm{LFU}}^{x^0,Y,\overline{H}}\times\mathcal{G}_{\mathrm{LFU}}^{\mymathbb{0},\widehat{Y},\widehat{\overline{H}}}$ such that $\nabla_{\mathrm{LFU}}f(x^0;Y;\overline{H})-Q\nabla_{\mathrm{LFU}}\widehat{f}(\mymathbb{0};\widehat{Y};\widehat{\overline{H}})\in\mathrm{col}(Q)^\perp$, the corresponding models satisfy
        \begin{equation*}
            m_{\mathrm{LFU}}^{x^0,Y,\overline{H}}(x)=\widehat{m}_{\mathrm{LFU}}^{\mymathbb{0},\widehat{Y},\widehat{\overline{H}}}(\widehat{x})~~~\text{for all $(x,\widehat{x})\in\mathbb{R}^n\times\mathbb{R}^d$ with $x=x^0+Q\widehat{x}$}.
        \end{equation*}
        In particular, if $\overline{H}=QQ^\top\overline{H}QQ^\top$ and $\nabla_{\mathrm{LFU}}f(x^0;Y;\overline{H})=Q\nabla_{\mathrm{LFU}}\widehat{f}(\mymathbb{0};\widehat{Y};\widehat{\overline{H}})$, then
        \begin{equation*}
            m_{\mathrm{LFU}}^{x^0,Y,\overline{H}}(x)=\widehat{m}_{\mathrm{LFU}}^{\mymathbb{0},\widehat{Y},\widehat{\overline{H}}}(\widehat{x})~~~\text{for all $(x,\widehat{x})\in\mathbb{R}^n\times\mathbb{R}^d$ with $x\in \left(x^0+Q\widehat{x}\right)+\mathrm{col}(Q)^\perp$}.
        \end{equation*}
    \end{theo}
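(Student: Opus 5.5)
The plan is to follow the proofs of Theorems~\ref{thm:full&subspaceMN} and \ref{thm:full&subspaceMFN}, but to decompose the shifted variable $H-\overline{H}$ rather than $H$ itself. With $Q_\perp$ as in the proof of Theorem~\ref{thm:full&subspaceMN}, we write $H-\overline{H}=Q(\widehat{H}-\widehat{\overline{H}})Q^\top+Q\Delta_0Q_\perp^\top+Q_\perp\Delta_0^\top Q^\top+Q_\perp\Delta_\perp Q_\perp^\top$. Here $\widehat{H}=Q^\top HQ$, $\Delta_0=Q^\top(H-\overline{H})Q_\perp$ and $\Delta_\perp=Q_\perp^\top(H-\overline{H})Q_\perp$. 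Orthogonality gives
\begin{equation*}
\left\|H-\overline{H}\right\|_F^2=\left\|\widehat{H}-\widehat{\overline{H}}\right\|_F^2+2\left\|\Delta_0\right\|_F^2+\left\|\Delta_\perp\right\|_F^2.
\end{equation*}
By \eqref{eq:fullsubsameconstraint}, the constraints of Problem~\eqref{pro:LFU} involve only $\widehat{\alpha}=Q^\top\alpha$ and $\widehat{H}$. The map from $(\widehat{H},\Delta_0,\Delta_\perp)$ to $H$ is a bijection onto $S_n(\mathbb{R})$. Problem~\eqref{pro:LFU} therefore splits into the subspace LFU problem in $(\widehat{\alpha},\widehat{H})$ with reference matrix $\widehat{\overline{H}}$, plus unconstrained terms in $\Delta_0$, $\Delta_\perp$ and $\widehat{\alpha}_\perp$. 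At the optimum $\Delta_0=0$ and $\Delta_\perp=0$.

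Uniqueness of both Hessians follows from Remark~\ref{rem:LFU_unique}, since feasibility of either problem is equivalent to Assumption~\ref{ass:interpexists}. To obtain the Hessian formula, we set $\Delta_0=\Delta_\perp=0$ and use $QQ^\top+Q_\perp Q_\perp^\top=I_n$. This gives $\nabla^2_{\mathrm{LFU}}f=Q(\widehat{H}^*-\widehat{\overline{H}})Q^\top+\overline{H}$, where $\widehat{H}^*$ is the subspace LFU Hessian, and hence the stated formula $Q\widehat{H}^*Q^\top+\overline{H}-QQ^\top\overline{H}QQ^\top$. The one step needing care is the sign bookkeeping. We must check that $\overline{H}=QQ^\top\overline{H}QQ^\top+(\text{off-diagonal and }\perp\text{ blocks})$, so that the zero blocks of $H-\overline{H}$ leave exactly those blocks of $\overline{H}$ in $H$. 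We also need $Q^\top\nabla^2_{\mathrm{LFU}}fQ=\widehat{H}^*$.

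The gradient-set identity is argued as in Theorem~\ref{thm:full&subspaceMFN}. With the Hessian fixed at the optimum, the constraints read $(\widehat{d}^i)^\top Q^\top\alpha+\tfrac12(\widehat{d}^i)^\top\widehat{H}^*\widehat{d}^i=\widehat{f}(\widehat{d}^i)-\widehat{f}(\mymathbb{0})$. So $\alpha$ is optimal if and only if $Q^\top\alpha$ is a subspace optimal gradient, with $Q_\perp^\top\alpha$ free. Writing $\alpha=QQ^\top\alpha+Q_\perp Q_\perp^\top\alpha$ gives the set relation.

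For the model values on $\mathcal{Y}$, we take $x=x^0+Q\widehat{x}$ and write the full gradient as $Q\widehat{g}+v$ with $v\perp\mathrm{col}(Q)$. Then $v^\top Q\widehat{x}=0$ and $Q^\top\nabla^2_{\mathrm{LFU}}fQ=\widehat{H}^*$, so the computation of Theorem~\ref{thm:full&subspaceMFN} applies verbatim. In the special case $\overline{H}=QQ^\top\overline{H}QQ^\top$, the full Hessian reduces to $Q\widehat{H}^*Q^\top$. If moreover the gradient equals $Q\widehat{g}$, the models have exactly the form treated in the proof of \eqref{eq:samemodelvalMN}, and that computation yields coincidence on $(x^0+Q\widehat{x})+\mathrm{col}(Q)^\perp$. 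I expect no real obstacle beyond the block-decomposition bookkeeping for $\overline{H}$.
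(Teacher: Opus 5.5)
Your proposal is correct and is essentially the paper's proof. The paper splits $H$ and $\overline{H}$ into blocks and gets $\widehat{H}_0=\widehat{\overline{H}}_0$ and $\widehat{H}_\perp=\widehat{\overline{H}}_\perp$ at the optimum, which is your $\Delta_0=\Delta_\perp=0$ written after shifting by $\overline{H}$; the gradient-set identity and the model-value claims are then handled exactly as in Theorem~\ref{thm:full&subspaceMFN}, as you propose.
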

    \begin{proof}
        Denote $\widehat{\overline{H}}_0=Q^\top \overline{H}Q_{\perp}\in\mathbb{R}^{d\times (n-d)}$ and $\widehat{\overline{H}}_{\perp}=Q_{\perp}^\top \overline{H}Q_{\perp}\in S_{n-d}(\mathbb{R})$.  Similar to the proof of Theorem \ref{thm:full&subspaceMN}, we can write Problem \eqref{pro:LFU} as
        \begin{align*}
            \min_{\substack{\widehat{\alpha}\in\mathbb{R}^d,\widehat{H}\in S_d(\mathbb{R})\\ \widehat{\alpha}_{\perp}\in\mathbb{R}^{n-d}\\ \widehat{H}_0\in\mathbb{R}^{d\times (n-d)},\widehat{H}_{\perp}\in S_{n-d}(\mathbb{R})}}~~~&\frac{1}{2}\left\|\widehat{H}-\widehat{\overline{H}}\right\|_F^2+\left\|\widehat{H}_0-\widehat{\overline{H}}_0\right\|_F^2+\frac{1}{2}\left\|\widehat{H}_{\perp}-\widehat{\overline{H}}_{\perp}\right\|_F^2\\
            s.t.~~~&\left(\widehat{d}^i\right)^\top \widehat{\alpha}+\frac{1}{2}\left(\widehat{d}^i\right)^\top \widehat{H}\left(\widehat{d}^i\right) = \widehat{f}(\widehat{d}^i)-\widehat{f}(\mymathbb{0}),~~~i=1,\ldots,m,
        \end{align*}
        which implies that $\widehat{H}_0=\widehat{\overline{H}}_0$ and $\widehat{H}_{\perp}=\widehat{\overline{H}}_{\perp}$ at the optimal solution and so
        \begin{align*}
            \nabla_{\mathrm{LFU}}^2f(x^0;Y;\overline{H}) &= Q\nabla_{\mathrm{LFU}}^2\widehat{f}(\mymathbb{0};\widehat{Y};\widehat{\overline{H}})Q^\top + QQ^\top\overline{H}Q_{\perp}Q_{\perp}^\top + Q_{\perp}Q_{\perp}^\top\overline{H}QQ^\top + Q_{\perp}Q_{\perp}^\top\overline{H}Q_{\perp}Q_{\perp}^\top\\
            &= Q\nabla_{\mathrm{LFU}}^2\widehat{f}(\mymathbb{0};\widehat{Y};\widehat{\overline{H}})Q^\top + \overline{H} - QQ^\top\overline{H}QQ^\top.
        \end{align*}
        Notice that 
        \begin{equation*}
            Q^\top\nabla_{\mathrm{LFU}}^2f(x^0;Y;\overline{H})Q = \nabla_{\mathrm{LFU}}^2\widehat{f}(\mymathbb{0};\widehat{Y};\widehat{\overline{H}}) + Q^\top\overline{H}Q - Q^\top\overline{H}Q = \nabla_{\mathrm{LFU}}^2\widehat{f}(\mymathbb{0};\widehat{Y};\widehat{\overline{H}})
        \end{equation*}
        and if $\overline{H}=QQ^\top\overline{H}QQ^\top$, then
        \begin{equation*}
            \nabla_{\mathrm{LFU}}^2f(x^0;Y;\overline{H}) = Q\nabla_{\mathrm{LFU}}^2\widehat{f}(\mymathbb{0};\widehat{Y};\widehat{\overline{H}})Q^\top.
        \end{equation*}
        The rest of the proof is the same as that of Theorem \ref{thm:full&subspaceMFN}.
    \end{proof}
    \begin{rema}
        If we take $\overline{H}$ to be the zero matrix, then Problem \eqref{pro:LFU} reduces to Problem \eqref{pro:MFN}, and Theorem \ref{thm:full&subspaceLFU} aligns with Theorem \ref{thm:full&subspaceMFN}. 
    \end{rema}

    We end this section with an example showing how $\overline{H}$ can fix the model's behavior outside $\mathcal{Y}$.
    \begin{exampl}
        Consider $n=3$, $f(x)=\|x\|^2$, $\overline{H}=I_3$, $x^0=\mymathbb{0}$ and $Y=\{\mymathbb{0},e^1,e^2,e^1+e^2\}\subseteq\mathbb{R}^3$.  Then, $d=2$, $Q=[e^1~e^2]\in\mathbb{R}^{3\times 2}$, $\widehat{f}(\widehat{x})=\|\widehat{x}\|^2$, $\widehat{\overline{H}}=I_2$, and $\widehat{Y}=\{\mymathbb{0},e^1,e^2,e^1+e^2\}\subseteq\mathbb{R}^2$.
        
        Solving Problem~\eqref{pro:LFU}, we get
        \begin{equation*}
            \mathcal{M}^{x^0,Y,\overline{H}}_{\mathrm{LFU}} = \left\{\left(\begin{bmatrix}
                \frac{1}{2}\\
                \frac{1}{2}\\
                r
            \end{bmatrix}, \begin{bmatrix}
                1 & 0 & 0\\
                0 & 1 & 0\\
                0 & 0 & 1
            \end{bmatrix}\right): r\in\mathbb{R}\right\}~~~\text{and}~~~\mathcal{M}^{\mymathbb{0},\widehat{Y},\widehat{\overline{H}}}_{\mathrm{LFU}} = \left\{\left(\begin{bmatrix}
                \frac{1}{2}\\
                \frac{1}{2}
            \end{bmatrix}, \begin{bmatrix}
                1 & 0\\
                0 & 1
            \end{bmatrix}\right)\right\}.
        \end{equation*}
        Now, pick 
        \begin{equation*}
            \nabla_{\mathrm{LFU}}f(x^0;Y;\overline{H}) = \begin{bmatrix}
                \frac{1}{2}\\
                \frac{1}{2}\\
                0
            \end{bmatrix}~~~\text{and}~~~\nabla_{\mathrm{LFU}}\widehat{f}(\mymathbb{0};\widehat{Y};\widehat{\overline{H}}) = \begin{bmatrix}
                \frac{1}{2}\\
                \frac{1}{2}
            \end{bmatrix}
        \end{equation*}
        so $\nabla_{\mathrm{LFU}}f(x^0;Y;\overline{H})=Q\nabla_{\mathrm{LFU}}\widehat{f}(\mymathbb{0};\widehat{Y};\widehat{\overline{H}})$.  The full-space and subspace LFU models are
        \begin{equation*}
            m_{\mathrm{LFU}}^{x^0,Y,\overline{H}}(x) = \begin{bmatrix}
                \frac{1}{2}\\
                \frac{1}{2}\\
                0
            \end{bmatrix}^\top x + \frac{1}{2}x^\top x~~~\text{and}~~~\widehat{m}_{\mathrm{LFU}}^{\mymathbb{0},\widehat{Y},\widehat{\overline{H}}}(\widehat{x}) = \begin{bmatrix}
                \frac{1}{2}\\
                \frac{1}{2}
            \end{bmatrix}^\top\widehat{x} + \frac{1}{2}\widehat{x}^\top\widehat{x}.
        \end{equation*}
        They coincide on $\mathcal{Y}=\{x\in\mathbb{R}^3:(e^3)^\top x=0\}$ but may not coincide along directions in $\mathrm{col}(Q)^\perp$.  Indeed, consider $x=e^3\in\mathbb{R}^3$ and $\widehat{x}=\mymathbb{0}\in\mathbb{R}^2$.  Then $x\in(x^0+Q\widehat{x})+\mathrm{col}(Q)^\perp$, but $m_{\mathrm{LFU}}^{x^0,Y,\overline{H}}(x)=\frac{1}{2}$, which differs from $\widehat{m}_{\mathrm{LFU}}^{\mymathbb{0},\widehat{Y},\widehat{\overline{H}}}(\widehat{x})=0$.
    \end{exampl}

\section{Full-space and subspace GSG, GSH, and QGSD models}\label{sec:full&subspaceGSGGSH}
    This section examines the relationships between the full-space and subspace GSG, GSH, and QGSD models.  We begin by presenting conversion formulas linking the full-space and subspace GSG and GSH.
    
    \begin{theo}\label{thm:full&subspaceGSG}
        Suppose that Assumption \ref{ass:Yisinsubspace} holds with
        \begin{equation*}
            Y=\{x^0\}\cup\{x^0+s^i:i=1,\ldots,p\}.
        \end{equation*}
        Then, there exist $\widehat{S}=[\widehat{s}^1\cdots\widehat{s}^p]\in\mathbb{R}^{d\times p}$ such that $S=Q\widehat{S}$.  The GSG of $f$ at $x^0$ over $S$ and the GSG of $\widehat{f}$ at $\mymathbb{0}$ over $\widehat{S}$ satisfy
        \begin{equation*}
            \nabla_sf(x^0;S) = Q\nabla_s\widehat{f}(\mymathbb{0};\widehat{S}).
        \end{equation*}
    \end{theo}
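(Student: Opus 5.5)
First I obtain $\widehat{S}$ by setting $\widehat{s}^i=Q^\top s^i$. By Assumption~\ref{ass:Yisinsubspace}, each $x^0+s^i\in\mathcal{Y}$, so $s^i=Q\widehat{s}^i$ for some $\widehat{s}^i\in\mathbb{R}^d$. Multiplying by $Q^\top$ and using $Q^\top Q=I_d$ shows that $\widehat{s}^i=Q^\top s^i$. Hence $S=Q\widehat{S}$ with $\widehat{S}=Q^\top S$.

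Next I show that the function-difference vectors agree. Since $\widehat{f}(\widehat{x})=f(x^0+Q\widehat{x})$, we have $\widehat{f}(\widehat{s}^i)-\widehat{f}(\mymathbb{0})=f(x^0+s^i)-f(x^0)$ for every $i$. Therefore $\delta_{\widehat{f}}(\mymathbb{0};\widehat{S})=\delta_f(x^0;S)$, and it suffices to prove the purely linear-algebraic identity
\begin{equation*}
    \left(S^\top\right)^\dagger=\left(\widehat{S}^\top Q^\top\right)^\dagger = Q\left(\widehat{S}^\top\right)^\dagger .
\end{equation*}

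This pseudoinverse identity is the only real step. My plan is to verify directly that $X=Q(\widehat{S}^\top)^\dagger$ satisfies the four Moore--Penrose conditions for $A=\widehat{S}^\top Q^\top$. Write $B=\widehat{S}^\top$ and $B^\dagger=(\widehat{S}^\top)^\dagger$. Using $Q^\top Q=I_d$ repeatedly, the conditions reduce as follows:
\begin{itemize}
    \item $AXA=BB^\dagger BQ^\top=BQ^\top=A$.
    \item $XAX=QB^\dagger BB^\dagger=QB^\dagger=X$.
    \item $AX=BB^\dagger$, which is symmetric.
    \item $XA=QB^\dagger BQ^\top$, which is symmetric because $B^\dagger B$ is symmetric.
\end{itemize}
By uniqueness of the Moore--Penrose inverse, $(S^\top)^\dagger=Q(\widehat{S}^\top)^\dagger$. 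Alternatively, one could use the general rule $(BC)^\dagger=C^\dagger B^\dagger$, which holds when $C$ has orthonormal rows; here $C=Q^\top$ and $(Q^\top)^\dagger=Q$. The direct verification is self-contained, so I will use it.

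Combining the two steps gives
\begin{equation*}
    \nabla_sf(x^0;S)=Q\left(\widehat{S}^\top\right)^\dagger\delta_{\widehat{f}}(\mymathbb{0};\widehat{S})=Q\nabla_s\widehat{f}(\mymathbb{0};\widehat{S}),
\end{equation*}
which is the claimed identity. I expect no serious obstacle; the only care needed is in the pseudoinverse factorization, which is settled by the four-condition check above.
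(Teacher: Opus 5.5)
Your proposal is correct and follows the paper's proof: show $\delta_f(x^0;S)=\delta_{\widehat{f}}(\mymathbb{0};\widehat{S})$, then factor $(S^\top)^\dagger=(\widehat{S}^\top Q^\top)^\dagger=Q(\widehat{S}^\top)^\dagger$. The paper only asserts this pseudoinverse identity from the orthonormality of $Q$, while your check of the four Moore--Penrose conditions actually proves it.
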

    \begin{proof}
        By definition, we have that
        \begin{equation*}
            \delta_f(x^0;S) = \begin{bmatrix}
                f(x^0+s^1) - f(x^0)\\
                \vdots\\
                f(x^0+s^p) - f(x^0)
            \end{bmatrix} = \begin{bmatrix}
                \widehat{f}(\widehat{s}^1) - \widehat{f}(\mymathbb{0})\\
                \vdots\\
                \widehat{f}(\widehat{s}^p) - \widehat{f}(\mymathbb{0})
            \end{bmatrix} = \delta_{\widehat{f}}(\mymathbb{0};\widehat{S}).
        \end{equation*}
        Since $Q$ has orthonormal columns, we have that $(S^\top)^\dagger = (\widehat{S}^\top Q^\top)^\dagger = Q(\widehat{S}^\top)^\dagger$ and so
        \begin{equation*}
            \nabla_sf(x^0;S) = \left(S^\top\right)^\dagger\delta_f(x^0;S) = Q\left(\widehat{S}^\top\right)^\dagger\delta_{\widehat{f}}(\mymathbb{0};\widehat{S}) = Q\nabla_s\widehat{f}(\mymathbb{0};\widehat{S}).
        \end{equation*}
    \end{proof}
    
    \begin{theo}\label{thm:full&subspaceGSH}
        Suppose that Assumption \ref{ass:Yisinsubspace} holds with
        \begin{align*}
            Y=\left\{x^0\right\}&\cup\left\{x^0+s^i:i=1,\ldots,p\right\}\\
            &\cup\left\{x^0+t_i^j:i=1,\ldots,p,j=1,\ldots,q_i\right\}\cup\left\{x^0+s^i+t_i^j:i=1,\ldots,p,j=1,\ldots,q_i\right\}.
        \end{align*}
        Then, there exist $\widehat{S}=[\widehat{s}^1\cdots\widehat{s}^p]\in\mathbb{R}^{d\times p}$ and $\widehat{T}_i=[\widehat{t}_i^1\cdots\widehat{t}_i^{q_i}]\in\mathbb{R}^{d\times {q_i}},i=1,\ldots,p$ such that $S=Q\widehat{S}$ and $T_i=Q\widehat{T}_i,i=1,\ldots,p$.  Denote $\widehat{T}_{1:p}=\{\widehat{T}_1,\ldots,\widehat{T}_p\}$.  The GSH of $f$ at $x^0$ over $S$ and $T_{1:p}$ and the GSH of $\widehat{f}$ at $\mymathbb{0}$ over $\widehat{S}$ and $\widehat{T}_{1:p}$ satisfy
        \begin{equation*}
            \nabla_s^2f(x^0;S;T_{1:p}) = Q\nabla_s^2\widehat{f}(\mymathbb{0};\widehat{S};\widehat{T}_{1:p})Q^\top.
        \end{equation*}
    \end{theo}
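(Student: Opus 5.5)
The plan is to reduce the claim to Theorem~\ref{thm:full&subspaceGSG}, applied row by row to $\delta_f^2$, and then use the pseudoinverse identity from its proof once more. First, because every point of $Y$ lies in $\mathcal{Y}$, each vector $s^i$, $t_i^j$ and $s^i+t_i^j$ lies in $\mathrm{col}(Q)$. So there exist $\widehat{s}^i=Q^\top s^i$ and $\widehat{t}_i^j=Q^\top t_i^j$ with $s^i=Q\widehat{s}^i$ and $t_i^j=Q\widehat{t}_i^j$, using $QQ^\top=P_{\mathrm{col}(Q)}$. Collecting these vectors as columns gives $S=Q\widehat{S}$ and $T_i=Q\widehat{T}_i$.

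Next, consider the $i$-th row of $\delta_f^2(x^0;S;T_{1:p})$, which involves $\nabla_sf(x^0+s^i;T_i)$ and $\nabla_sf(x^0;T_i)$. The second term is covered directly by Theorem~\ref{thm:full&subspaceGSG}: the set $\{x^0\}\cup\{x^0+t_i^j\}$ lies in $\mathcal{Y}$, so $\nabla_sf(x^0;T_i)=Q\nabla_s\widehat{f}(\mymathbb{0};\widehat{T}_i)$.

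The first term is centered at $x^0+s^i$ rather than $x^0$, which is the only mild obstacle. I would not re-center $\mathcal{Y}$ to invoke the theorem again. Instead I would redo its two-line computation directly. For each $j$,
\[
f(x^0+s^i+t_i^j)-f(x^0+s^i)=\widehat{f}(\widehat{s}^i+\widehat{t}_i^j)-\widehat{f}(\widehat{s}^i),
\]
so $\delta_f(x^0+s^i;T_i)=\delta_{\widehat{f}}(\widehat{s}^i;\widehat{T}_i)$. Combined with $(T_i^\top)^\dagger=Q(\widehat{T}_i^\top)^\dagger$, this gives $\nabla_sf(x^0+s^i;T_i)=Q\nabla_s\widehat{f}(\widehat{s}^i;\widehat{T}_i)$. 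Subtracting the two terms and transposing shows that the $i$-th row of $\delta_f^2$ equals the $i$-th row of $\delta_{\widehat{f}}^2(\mymathbb{0};\widehat{S};\widehat{T}_{1:p})$ multiplied on the right by $Q^\top$. Hence
\[
\delta_f^2(x^0;S;T_{1:p})=\delta_{\widehat{f}}^2(\mymathbb{0};\widehat{S};\widehat{T}_{1:p})Q^\top.
\]

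Finally, I would apply $(S^\top)^\dagger=Q(\widehat{S}^\top)^\dagger$ from the proof of Theorem~\ref{thm:full&subspaceGSG}. This yields
\[
\nabla_s^2f(x^0;S;T_{1:p})=Q(\widehat{S}^\top)^\dagger\delta_{\widehat{f}}^2(\mymathbb{0};\widehat{S};\widehat{T}_{1:p})Q^\top=Q\nabla_s^2\widehat{f}(\mymathbb{0};\widehat{S};\widehat{T}_{1:p})Q^\top.
\]
The pseudoinverse identity $(\widehat{S}^\top Q^\top)^\dagger=Q(\widehat{S}^\top)^\dagger$ is taken from the earlier proof. If needed, it can be checked quickly against the four Penrose conditions using $Q^\top Q=I_d$.
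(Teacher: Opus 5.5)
Your proposal is correct and follows the same route as the paper: you establish $\delta_f^2(x^0;S;T_{1:p})=\delta_{\widehat{f}}^2(\mymathbb{0};\widehat{S};\widehat{T}_{1:p})Q^\top$ row by row via the GSG conversion, and then apply $(S^\top)^\dagger=Q(\widehat{S}^\top)^\dagger$. Your direct check that $\delta_f(x^0+s^i;T_i)=\delta_{\widehat{f}}(\widehat{s}^i;\widehat{T}_i)$, for the term centered at $x^0+s^i$, spells out what the paper leaves implicit when it simply cites the GSG theorem.
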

    \begin{proof}
        From Theorem \ref{thm:full&subspaceGSG}, we have that
        \begin{equation*}
            \delta_f^2(x^0;S;T_{1:p}) = \begin{bmatrix}
                \left(\nabla_s\widehat{f}(\widehat{s}^1;\widehat{T}_1)-\nabla_s\widehat{f}(\mymathbb{0};\widehat{T}_1)\right)^\top Q^\top\\
                \vdots\\
                \left(\nabla_s\widehat{f}(\widehat{s}^p;\widehat{T}_p)-\nabla_s\widehat{f}(\mymathbb{0};\widehat{T}_p)\right)^\top Q^\top
            \end{bmatrix} = \delta_{\widehat{f}}^2(\mymathbb{0};\widehat{S};\widehat{T}_{1:p})Q^\top
        \end{equation*}
        and so
        \begin{equation*}
            \nabla_s^2f(x^0;S;T_{1:p}) = \left(S^\top\right)^\dagger\delta_f^2(x^0;S;T_{1:p}) = Q\left(\widehat{S}^\top\right)^\dagger\delta_{\widehat{f}}^2(\mymathbb{0};\widehat{S};\widehat{T}_{1:p})Q^\top = Q\nabla_s^2\widehat{f}(\mymathbb{0};\widehat{S};\widehat{T}_{1:p})Q^\top.
        \end{equation*} 
    \end{proof}

    Using Theorems \ref{thm:full&subspaceGSG} and \ref{thm:full&subspaceGSH}, we establish model value results for the full-space and subspace QGSD models analogous to \eqref{eq:samemodelvalMN}.
    \begin{theo}
        Under the assumption and notation of Theorem \ref{thm:full&subspaceGSH}, the QGSD model of $f$ at $x^0$ over $Y$ and the QGSD model of $\widehat{f}$ at $\mymathbb{0}$ over $\widehat{Y}$ satisfy
        \begin{equation*}
            m_{\mathrm{QGSD}}^{x^0,Y}(x)=\widehat{m}_{\mathrm{QGSD}}^{\mymathbb{0},\widehat{Y}}(\widehat{x})~~~\text{for all $(x,\widehat{x})\in\mathbb{R}^n\times\mathbb{R}^d$ with $x\in \left(x^0+Q\widehat{x}\right)+\mathrm{col}(Q)^\perp$}.
        \end{equation*}
    \end{theo}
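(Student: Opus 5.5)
The plan is to show first that the full-space QGSD gradient and Hessian are the lifts $Q(\cdot)$ and $Q(\cdot)Q^\top$ of their subspace counterparts. Once that is done, the model-value identity follows from the same computation used to prove \eqref{eq:samemodelvalMN} in Theorem~\ref{thm:full\&subspaceMN}.

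\textbf{Step 1: matching the two constructions.} By definition, $\nabla_{\mathrm{QGSD}}f(x^0;Y)$ is a linear combination $\sum_k a_k\nabla_sf(x^0;S_k)$ of GSGs, and $\nabla^2_{\mathrm{QGSD}}f(x^0;Y)$ is a linear combination $\sum_l b_l\nabla^2_sf(x^0;S'_l;T'_{l,1:p})$ of GSHs. All the matrices $S_k$, $S'_l$, $T'_{l,i}$ have columns that are differences of points of $Y$ with $x^0$, possibly after a recentering as in the GSH definition. I would interpret the subspace QGSD model as the one built with the same coefficients $a_k,b_l$ from the corresponding matrices $\widehat{S}_k$, $\widehat{S}'_l$, $\widehat{T}'_{l,i}$, each satisfying $S_k=Q\widehat{S}_k$, and so on. Such matrices exist because every point of $Y$ lies in $\mathcal{Y}$, so every displacement lies in $\mathrm{col}(Q)$. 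I expect this identification of "the" subspace model to be the main subtle point. The QGSD definition is a class of models, so the statement has to be read as pairing each full-space choice with its hatted counterpart, and I will say this explicitly.

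\textbf{Step 2: lifting the derivatives.} Apply Theorem~\ref{thm:full\&subspaceGSG} termwise to get $\nabla_sf(x^0;S_k)=Q\nabla_s\widehat{f}(\mymathbb{0};\widehat{S}_k)$. Apply Theorem~\ref{thm:full\&subspaceGSH} termwise to get $\nabla^2_sf(x^0;S'_l;T'_{l,1:p})=Q\nabla^2_s\widehat{f}(\mymathbb{0};\widehat{S}'_l;\widehat{T}'_{l,1:p})Q^\top$. GSHs evaluate GSGs at the shifted points $x^0+s^i=x^0+Q\widehat{s}^i$, so the proof of Theorem~\ref{thm:full\&subspaceGSH} already handles that. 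For a nonstandard choice such as $\nabla_sf(x^0;2S)$, the same theorem applies to $2S=Q(2\widehat{S})$, provided the relevant points stay in $\mathcal{Y}$. That holds automatically, since they are in $Y$, or at worst in $\mathcal{Y}$. By linearity, $\nabla_{\mathrm{QGSD}}f(x^0;Y)=Q\nabla_{\mathrm{QGSD}}\widehat{f}(\mymathbb{0};\widehat{Y})$ and $\nabla^2_{\mathrm{QGSD}}f(x^0;Y)=Q\nabla^2_{\mathrm{QGSD}}\widehat{f}(\mymathbb{0};\widehat{Y})Q^\top$. The symmetrized Hessian variant is also covered, because symmetrization commutes with $A\mapsto QAQ^\top$.

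\textbf{Step 3: model values.} Write $x=x^0+Q\widehat{x}+v$ with $v\in\mathrm{col}(Q)^\perp$, and use $f(x^0)=\widehat{f}(\mymathbb{0})$, $Q^\top v=\mymathbb{0}$ and $Q^\top Q=I_d$. Expanding $m_{\mathrm{QGSD}}^{x^0,Y}(x)$ exactly as in the proof of Theorem~\ref{thm:full\&subspaceMN}, the $v$ terms vanish, and the result is $\widehat{m}_{\mathrm{QGSD}}^{\mymathbb{0},\widehat{Y}}(\widehat{x})$. This finishes the proof.
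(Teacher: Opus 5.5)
Your proposal is correct and follows the same route as the paper: lift the QGSD gradient and Hessian termwise using Theorems~\ref{thm:full&subspaceGSG} and~\ref{thm:full&subspaceGSH} together with linearity, then reuse the computation behind \eqref{eq:samemodelvalMN}. Your explicit pairing of each full-space construction with its hatted counterpart, and your check that symmetrization commutes with $A\mapsto QAQ^\top$, make precise what the paper's one-line proof leaves implicit.
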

    \begin{proof}
        Notice that Theorems \ref{thm:full&subspaceGSG} and \ref{thm:full&subspaceGSH} apply to any linear combination of GSG and GSH, respectively. Therefore, we have that
        \begin{equation*}
            \nabla_{\mathrm{QGSD}}f(x^0;Y) = Q\nabla_{\mathrm{QGSD}}\widehat{f}(\mymathbb{0};\widehat{Y})~~~\text{and}~~~\nabla^2_{\mathrm{QGSD}}f(x^0;Y) = Q\nabla^2_{\mathrm{QGSD}}\widehat{f}(\mymathbb{0};\widehat{Y})Q^\top.
        \end{equation*}
        The result follows from the proof of \eqref{eq:samemodelvalMN}.
    \end{proof}

\section{Conclusion}\label{sec:conclusion}
    Motivated by recent developments in subspace model-based DFO methods, this paper develops a theoretical framework that clarifies the relationships between several widely used subspace modeling techniques and their full-space counterparts.  In particular, we analyze the connections between full-space and subspace MN models, MFN models, LFU models, GSG, GSH, and QGSD models.  For each modeling or approximation technique, we derive explicit conversion formulas between the full-space and subspace formulations.  We show that the full-space and subspace models coincide on the affine subspace and, in general, along directions in the orthogonal complement.  Overall, this paper develops theoretical tools for the analysis of subspace approximation techniques, with direct applicability to the design and analysis of model-based DFO methods.

    A natural next step is to apply the results in this paper to obtain new error bounds for these modeling and approximation techniques.   In particular, existing error bounds, which are typically derived under full-space sampling assumptions, can potentially be extended to settings where the sample set lies in a lower-dimensional subspace. Moreover, our framework may facilitate progress on the four open cases in which error bounds for the GSH remain unknown, as identified in \cite{hare2024matrix}.

    Another promising direction for future research is a detailed comparison of these modeling techniques, focusing on the conditions under which they coincide and under which one model may offer superior accuracy relative to others.  We note that this line of investigation is currently the subject of our ongoing work.

\section*{Declarations}
    \noindent\small{\textbf{Conflict of interest} The author declares that there are no competing interests.}

    \bigskip
    
    \noindent\small{\textbf{Data availability} This manuscript has no associated data.}

\normalsize
\bibliographystyle{siam}
\bibliography{references}
\end{document}